\newtheorem{theorem}{Theorem}[section]
\newtheorem{corollary}[theorem]{Corollary}
\newtheorem{remark}[theorem]{Remark}
\numberwithin{equation}{section}
\numberwithin{table}{section}
\begin{document}
\def\d{\displaystyle}
\def\R{\mathbb{R}}
\def\bbK{\mathbb{K}}
\def\argmin{\mathop{\rm argmin}}
\def\fA{\mathscr{A}\hspace{-2pt}}
\def\fK{\mathfrak{K}}
\def\cK{\mathscr{K}\hspace{-4pt}}
\def\TG{T_{\scriptscriptstyle G}}
\def\EG{E_{\scriptscriptstyle G}}
\def\muG{\mu_{\scriptscriptstyle \bar{G}}}
\def\cT{\mathcal{T}}
\def\tbar{|\!|\!|}
\def\LT{{L_2(I)}}
\def\HO{{H^1(I)}}
\def\bq{{\bm q}}
\def\bp{{\bm p}}
\def\bw{{\bm w}}
\def\p{\partial}
\def\O{\Omega}
\def\bPhi{{\bm \Phi}}
\def\Div{\textrm{div}\hspace{1pt}}
\def\Curl{\textrm{curl}\hspace{1pt}}
\def\bG{\bm{G}}
\def\bK{\bm{K}}
\def\bn{{\bm n}}
\def\ker{\textrm{Ker}({\rm div})}
\title[A One Dimensional Optimal Control Problem with Derivative Constraints]
{A One Dimensional Elliptic Distributed Optimal Control Problem with
Pointwise Derivative Constraints}
\author{S.C. Brenner}
\address{Susanne C. Brenner, Department of Mathematics and Center for
Computation and Technology, Louisiana State University, Baton Rouge,
LA 70803, USA}
\email{brenner@math.lsu.edu}
\author{L.-Y. Sung}
\address{Li-yeng Sung,
 Department of Mathematics and Center for Computation and Technology,
 Louisiana State University, Baton Rouge, LA 70803, USA}
\email{sung@math.lsu.edu}
\author{W. Wollner}
\address{Winnifried Wollner, Department of Mathematics, Technische
Universit\"at Darmstadt, 64293 Darmstadt, Germany}
\email{wollner@mathematik.tu-darmstadt.de}
\thanks{The work of the first and second authors was supported in part
 by the National Science Foundation under Grant No.
 DMS-19-13035.}
\begin{abstract}
  We consider a one dimensional elliptic distributed optimal control
  problem with pointwise constraints on the
  derivative of the state.  By exploiting the variational inequality
  satisfied by the derivative
  of the optimal state, we obtain higher regularity for the optimal state
  under appropriate assumptions on the data.
  We also solve the optimal control problem as a fourth order variational
  inequality by a $C^1$ finite element method, and present the error analysis
  together with numerical results.
\end{abstract}
\subjclass{49J15, 65N30, 65N15}
\keywords{one dimensional, elliptic distributed optimal control problem, pointwise derivative constraints,
 Dirichlet boundary condition, cubic Hermite element}
\maketitle
%
\section{Introduction}\label{sec:Introduction}
 Let $I$ be the interval $(-1,1)$ and the
 function $J:L_2(I)\times L_2(I)\longrightarrow \R$ be defined by
\begin{equation}\label{eq:JDef}
  J(y,u)=\frac12 \big(\|y-y_d\|_\LT^2+\beta\|u\|_\LT^2\big),
\end{equation}
 where  $y_d\in L_2(I)$ and $\beta$ is a positive constant.
\par
 The optimal control problem is to
\begin{equation}\label{eq:OCP}
  \text{find}\quad(\bar y,\bar u)=\argmin_{(y,u)\in \bbK}J(y,u),
\end{equation}
 where $(y,u)\in H^1_0(I)\times L_2(I)$ belongs to $\bbK$ if and only if
\begin{alignat}{3}
  \int_I y'z'dx&=\int_I (u+f)z\,dx&\qquad&\forall\,z\in H^1_0(I),
  \label{eq:PDEConstraint}\\
  y'&\leq \psi &\qquad&\text{a.e. on $I$}.\label{eq:DerivativeConstraint}
\end{alignat}
 We assume that
\begin{equation}\label{eq:DataRegularity}
  f\in H^1(I), \; \psi\in H^2(I)
\end{equation}
 and
\begin{equation}\label{eq:psiConstraint}
  \int_I \psi\,dx>0.
\end{equation}
\begin{remark}\label{rem:Literature}\rm
 The optimal control problem  defined by
 \eqref{eq:JDef}--\eqref{eq:DerivativeConstraint}
  is a  one dimensional analog
 of the optimal control problems considered in
  \cite{CB:1988:Gradient,CF:1993:Gradient,DGH:2009:Gradient,
  OW:2011:Gradient,Wollner:2012:Gradient}.
  It was solved by a $C^1$ finite element method in
  \cite{BSW:2020:OneD} under the assumptions that
\begin{equation}\label{eq:OldData}
  f\in H^{\frac12-\epsilon}(I)\quad\text{and}\quad
  \psi\in H^{\frac32-\epsilon}(I).
\end{equation}
\end{remark}
\par\medskip
 Since the constraint \eqref{eq:PDEConstraint} implies $y\in H^2(I)$
  by elliptic regularity,
 we can reformulate the optimization problem
 \eqref{eq:JDef}--\eqref{eq:DerivativeConstraint}
 as follows:
\begin{equation}\label{eq:ROCP}
  \text{Find}\quad \bar y=\argmin_{y\in K}\frac12\big(\|y-y_d\|_\LT^2
  +\beta\|y''+f\|_\LT^2\big),
\end{equation}
 where
\begin{equation}\label{eq:KDef}
  K=\{y\in H^2(I)\cap H^1_0(I):\,y'\leq \psi \;\text{on}\;I\}.
\end{equation}
\par
 According to the standard theory \cite{ET:1999:Convex,KS:1980:VarInequalities},
 the minimization problem defined by \eqref{eq:ROCP}-\eqref{eq:KDef}
  has a unique solution characterized by the
 fourth order variational inequality
\begin{equation*}
   \beta\int_I (\bar y''+f)(y''-\bar y'')dx+ \int_I (\bar y-y_d)
   (y-\bar y)dx\geq 0
    \qquad\forall\,y\in K,
\end{equation*}
 which can also be written as
\begin{equation}\label{eq:VI}
  a(\bar y,y-\bar y)\geq \int_I y_d(y-\bar y)dx-\beta\int_I f(y''-\bar y'')dx
  \qquad\forall\,y\in K,
\end{equation}
 where
\begin{equation}\label{eq:aDef}
  a(y,z)=\beta\int_I y'z'dx+\int_I yz\,dx.
\end{equation}
\begin{remark}\label{rem:VI}\rm
  The reformulation of state constraint optimal control problems as fourth order variational inequalities
  was discussed in \cite{PS:1996:OC}, and a nonconforming finite element based on this idea was introduced in
  \cite{LGY:2009:Control}.  Other finite element methods can be found in
  \cite{GY:2011:State,BSZ:2013:OptimalControl,BDS:2014:PUMOC,BOPPSS:2016:OC3D,BGPS:2018:Morley,
 BGS:2018:Nonconvex,BSZ:2019:Neumann}.
\end{remark}
\begin{remark}\label{rem:Nontrivial}\rm
Note that \eqref{eq:DerivativeConstraint} implies
\begin{equation*}
  \int_I\psi\,dx\geq \int_I y'\,dx=0 \qquad \forall\,y\in K
\end{equation*}
  and hence $\int_I \psi\,dx\geq0$ is a necessary condition for $K$
  to be nonempty.  It is also
   a sufficient condition because the function $y$ defined by
\begin{equation*}
  y(x)=\int_{-1}^x (\psi(t)-\bar\psi)dt
\end{equation*}
 belongs to $K$, where $\bar\psi$ is the mean of $\psi$ over $I$.
 Furthermore,
\begin{equation*}
  0=\int_I \psi\,dx=\int_I(\psi-y')dx
\end{equation*}
 together with \eqref{eq:DerivativeConstraint} implies $\psi=y'$
 identically on $I$ and hence
 $K=\{\psi\}$ is a singleton.   Therefore we impose the condition
 \eqref{eq:psiConstraint} to
 ensure that the optimization problem defined by
 \eqref{eq:ROCP}--\eqref{eq:KDef} is nontrivial.
\end{remark}
\par
 Our goal is to show that $\bar y\in H^3(I)$ under the assumptions in
 \eqref{eq:DataRegularity}
 and consequently \eqref{eq:ROCP}/\eqref{eq:VI} can be solved by a $C^1$
  finite element method
 with $O(h)$ convergence in the energy norm.  Note that previously
 $\bar y\in H^{\frac52-\epsilon}$ was the best regularity result in the
  literature for Dirichlet elliptic
 distributed optimal control
 problems on  smooth/convex domains with pointwise constraints on
 the gradient of the state.
\par
 The rest of the paper is organized as follows.
    The $H^3$ regularity of $\bar y$ is obtained in
 Section~\ref{sec:DerivativeVI} through a variational inequality for $\bar y'$
  that can be interpreted
 as a Neumann obstacle problem for the Laplace operator.  The $C^1$ finite
 element method for
 \eqref{eq:ROCP}/\eqref{eq:VI} is analyzed in Section~\ref{sec:Discrete},
  followed by numerical results
 in Section~\ref{sec:Numerics}.  We end with some remarks on the extension
  to higher dimensions
 in Section~\ref{sec:Conclusions}.
\section{A Variational Inequality for $\bar y'$}\label{sec:DerivativeVI}
 Observe that the set $\{y':\,y\in K\}$
  is the subset $\cK$ of $H^1(I)$
 given by
\begin{align}\label{eq:cKDef}
  \cK&=\{v\in H^1(I):\,\int_I v\,dx=0\quad\text{and}\quad v\leq\psi
  \;\text{on}\;I\},
\end{align}
 and the variational inequality \eqref{eq:VI} is equivalent to
\begin{align}\label{eq:NeumannVI}
  &\int_I (\Phi-f')(q-p)dx+\int_I p'(q'-p')dx\\
    &\hspace{40pt}+[f(1)(q(1)-p(1))-f(-1)(q(-1)-p(-1))]\geq0 \qquad
    \forall\,q\in \cK,\notag
\end{align}
 where $p=\bar y'$, $q=y'$, and $\Phi\in H^1(I)$ is determined by
\begin{align}
 \beta\Phi'&=y_d-\bar y \label{eq:PhiDef1}
\intertext{and}
 \int_I\Phi\,dx&=0.\label{eq:PhiDef2}
\end{align}
 Moreover \eqref{eq:NeumannVI} is the variational inequality that
 characterizes the solution of the following
 minimization problem:
\begin{equation}\label{eq:NeumannObstacle}
  \text{Find}\quad p=\argmin_{q\in \cK}\Big[\frac12 \int_I (q')^2dx
  +\int_I(\Phi-f')q\,dx
   +f(1) q(1)- f(-1) q(-1)\Big].
\end{equation}
\subsection{A Neumann Obstacle Problem}\label{subsec:Neumann}
 The minimization problem \eqref{eq:NeumannObstacle}, which is a
  Neumann obstacle problem, can be written more conveniently as
\begin{equation}\label{eq:ObstacleProblem}
  p=\argmin_{q\in \cK}\Big[\frac12 b(q,q)+(\phi,q)+ \tau q(1)
  -\sigma q(-1)\Big],
\end{equation}
 where $\sigma=f(-1)$, $\tau=f(1)$,
\begin{equation}\label{eq:Shorthands}
 b(q,r)=\int_I  q'r'\,dx,\quad
 (\phi,q)=\int_I \phi\, q\,dx \quad\text{and}\quad
 \phi=\Phi-f'.
\end{equation}
 Note that we have a compatibility condition
\begin{equation}\label{eq:Compatibility}
 \int_I \phi\,dx+\tau-\sigma=0
\end{equation}
 that follows from \eqref{eq:DataRegularity}, \eqref{eq:PhiDef2}
  and the Fundamental Theorem
  of Calculus for absolutely continuous functions.
\par
 Since $b(\cdot,\cdot)$ is coercive on $H^1(I)/\R$,
 the obstacle problem defined by \eqref{eq:cKDef}
 and \eqref{eq:ObstacleProblem}
 has a unique
 solution $p$ characterized by the variational inequality
\begin{equation}\label{eq:VI1}
 b(p,q-p)+(\phi,q-p)+\tau(q(1)-p(1))-\sigma(q(-1)-p(-1))\geq 0
 \qquad\forall\,q\in \cK.
\end{equation}
\begin{theorem}\label{thm:pRegularity}
  The solution $p=\bar y'\in \cK$ of \eqref{eq:ObstacleProblem}/\eqref{eq:VI1}
   belongs to $H^2(I)$.
\end{theorem}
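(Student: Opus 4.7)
The plan is to prove $p\in H^2(I)$ by analyzing $p$ separately on the non-contact set $\Omega^+:=\{x\in I:p(x)<\psi(x)\}$ and the contact set $C:=\{x\in I:p(x)=\psi(x)\}$. Since both $p$ and $\psi$ are continuous on $\bar I$ (as $H^1(I)$ functions in one dimension), $\Omega^+$ is open and $C$ is closed in $\bar I$.

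On $\Omega^+$ the variational inequality \eqref{eq:VI1} reduces to an equation. For any $v\in C_c^\infty(\Omega^+)$ with $\int_I v\,dx=0$ and sufficiently small $t>0$, both $q=p+tv$ and $q=p-tv$ belong to $\cK$, since on the compact support of $v$ the gap $\psi-p$ has a positive lower bound. Inserting both choices into \eqref{eq:VI1} yields $b(p,v)+(\phi,v)=0$. A standard distributional argument --- writing any $v\in C_c^\infty(\Omega^+)$ as a mean-zero part plus a multiple of a fixed mean-one bump --- shows $-p''+\phi$ is constant on each connected component of $\Omega^+$; applying the identity to test functions of the form $v_1-v_2$ with $v_i\in C_c^\infty(U_i)$, $\int v_i=1$, where $U_1,U_2$ are two components, forces these constants to coincide with a single $\lambda\in\R$. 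Therefore $p''=\phi-\lambda\in L_2(\Omega^+)$.

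On the interior $C^\circ$, $p\equiv\psi$ directly gives $p''=\psi''\in L_2(C^\circ)$ by \eqref{eq:DataRegularity}. Combined with the fact that $p'=\psi'$ almost everywhere on $C$ (the weak derivative of the Sobolev function $p-\psi$ vanishing a.e.\ on its zero set), the two pointwise identities combine into a candidate $L_2(I)$ function $g$ agreeing with $p''$ a.e.\ in $I$, the complementary free boundary $\partial\Omega^+\cap I$ having Lebesgue measure zero.

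The main obstacle is upgrading this pointwise identity to $p''=g$ in $\mathcal{D}'(I)$ --- equivalently, showing that $p'$ has no jumps across the free boundary. At a free boundary point $x_0\in I$, $p(x_0)=\psi(x_0)$ and $p-\psi\leq 0$ attains its maximum $0$ there. Using the Sobolev embedding $H^2(I)\hookrightarrow C^1(\bar I)$ for $\psi$ together with a careful test function analysis of \eqref{eq:VI1} with mean-zero perturbations localized near $x_0$ (respecting the obstacle constraint), one shows the one-sided limits $p'(x_0^\pm)$ coincide with $\psi'(x_0)$. This yields $p'\in H^1(I)$ and hence $p\in H^2(I)$.
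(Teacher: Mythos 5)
Your reduction on the non-contact set is sound: the two-sided perturbation with mean-zero test functions, and the $v_1-v_2$ device to force a single constant across components, correctly yield $p''=\phi-\lambda$ on $\Omega^+$ (this $\lambda$ plays the same role as the Lagrange multiplier the paper extracts from the Slater condition \eqref{eq:Slater} via \cite[Chapter 1, Theorem~1.6]{IK:2008:Lagrange}). The gap is the final step, which you rightly call ``the main obstacle'' but then only gesture at. First, your candidate $g$ is defined only on $\Omega^+\cup C^\circ$, and the claim that the remaining set $\partial\Omega^+\cap I$ is Lebesgue-null is not justified and is false for general closed sets: the contact set of an obstacle problem can be a fat Cantor set, whose boundary has positive measure (indeed $C^\circ$ may be empty while $C$ has positive measure). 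Second, and more seriously, the argument you sketch --- matching one-sided limits $p'(x_0^{\pm})$ with $\psi'(x_0)$ at a free boundary point --- presupposes that such one-sided limits exist (at this stage $p'$ is merely an $L_2$ function) and at best excludes atomic singular parts of the distribution $p''$; it cannot exclude a singular continuous part of $p''-(\phi-\lambda)$ carried by a Cantor-like free boundary. Ruling out that singular part is precisely the content of the $H^2$ regularity theorem for obstacle problems, which is normally established globally by penalization or via a Lewy--Stampacchia inequality of the form $\phi-\lambda\le p''\le\max\{\phi-\lambda,\psi''\}$ in the sense of distributions (the lower bound being the nonnegativity of the measure in \eqref{eq:ObstaclenuDef}), not by a pointwise inspection of free boundary points.

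The paper takes a different route that bypasses this entirely: it enlarges $\cK$ to $\tilde K$, invokes the Slater condition \eqref{eq:psiConstraint} and the Lagrange multiplier theorem to remove the mean-value constraint, and thereby arrives at the coercive Neumann obstacle problem \eqref{eq:VI4} over the standard convex set $\fK$, to which the known regularity result \cite[Chapter~5, Theorem~3.4]{Rodrigues:1987:Obstacle} applies directly since $F\in\LT$ and $\psi\in H^2(I)$. To repair your argument you should likewise reduce to a standard one-obstacle problem and either cite such a regularity theorem or reprove it (e.g.\ by penalization), rather than attempt a direct analysis of the free boundary.
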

\begin{proof}
 We begin by observing that
\begin{equation}\label{eq:VI2}
 b(p,q-p)+(\phi,q-p)+\tau(q(1)-p(1))-\sigma(q(-1)-p(-1))\geq 0
  \qquad\forall\,q\in\tilde K,
\end{equation}
 where
\begin{equation}\label{eq:tKDef}
  \tilde K=\{q\in H^1(I):\,q\leq\psi\;\;\text{in}\;I\;\;
  \text{and}\;\int_I q\,dx\geq0\}.
\end{equation}
 Indeed, $q\in \tilde K$ implies $q-\bar q\in K$, where $\bar q$
 is the mean of $q$ over $I$,
 and hence, in view of \eqref{eq:Compatibility}, the definition of
 $b(\cdot,\cdot)$ in
 \eqref{eq:Shorthands} and \eqref{eq:VI1},
\begin{align*}
    &b(p,q-p)+(\phi,q-p)+\tau(q(1)-p(1))-\sigma((q(-1)-p(-1))\\
      &\hspace{10pt}=b(p,q-\bar q-p)+(\phi,q-\bar q-p)+
      \tau(q(1)-\bar q-p(1))-\sigma(q(-1)-\bar q-p(-1))\\
      &\hspace{10pt}\geq 0
\end{align*}
 for all $q\in\tilde{K}$.
\par
 Let $\fK\subset H^1(I)$ be defined by
\begin{equation}\label{eq:bKDef}
 \fK=\{q\in H^1(I):\;q\leq \;\psi\;\;\text{in}\;I\},
\end{equation}
 and $G:H^1(I)\longrightarrow [0,\infty)$ be defined by
\begin{equation}\label{eq:gDef}
   G(q)=\int_I q\,dx.
\end{equation}
 Then the function $\psi$ belongs to $\fK$ and
\begin{equation}\label{eq:Slater}
  G(\psi)>0
\end{equation}
 by \eqref{eq:psiConstraint}.
\par
  It follows from the Slater condition \eqref{eq:Slater} and
  the theory of Lagrange multipliers
 \cite[Chapter 1, Theorem~1.6]{IK:2008:Lagrange} that there exists
 a nonnegative number $\lambda$
 such that
\begin{equation}\label{eq:VI3}
  b(p,q-p)+(\phi,q-p)+\tau((q(1)-p(1))-\sigma(q(-1)-p(-1))-\lambda\int_I (q-p)dx\geq 0
\end{equation}
 for all $q\in \fK$.
\par
 Finally we observe that
\begin{equation}\label{eq:VI4}
 \tilde b(p,q-p)+(F,q-p)+\tau(q(1)-p(1))-\sigma(q(-1)-p(-1))
 \geq 0\qquad\forall\,q\in\fK,
\end{equation}
 where
\begin{equation}\label{eq:taDef}
 \tilde b(q,r)=\int_I q'r'dx+\int_I qr\,dx
\end{equation}
 and
\begin{equation}\label{eq:FDef}
  F=\phi-\lambda-p.
\end{equation}
\par
 The variational inequality defined by \eqref{eq:bKDef},
 \eqref{eq:VI4} and \eqref{eq:taDef}
  characterizes the solution of
 a coercive Neumann  obstacle problem on $H^1(I)$.  Since
 $F\in \LT$ and $\psi\in H^2(I)$,
 we can apply the result in
 \cite[Chapter~5, Theorem~3.4]{Rodrigues:1987:Obstacle} to conclude that
 $p\in H^2(I)$.
\end{proof}
\par
 We can deduce the regularity of $(\bar y,\bar u)$ from the relations
 $p=\bar y'$ and $\bar u=-(\bar y''+f)$.
\begin{corollary}\label{cor:Regularity}
Under the assumption \eqref{eq:DataRegularity} on the data,
  the solution $(\bar y,\bar u)$ of the optimal control problem
  \eqref{eq:JDef}--\eqref{eq:psiConstraint}
  belongs to $H^3(I)\times H^1(I)$.
\end{corollary}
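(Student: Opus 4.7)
The plan is to read off the regularity of $\bar y$ and $\bar u$ directly from Theorem~\ref{thm:pRegularity} together with the two identities flagged just before the corollary, namely $p=\bar y'$ and $\bar u=-(\bar y''+f)$. Essentially no new analysis is needed; the work has already been done in the variational inequality for $p$.

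First I would invoke Theorem~\ref{thm:pRegularity} to conclude $\bar y' = p \in H^2(I)$. Combined with the fact that $\bar y \in H^1_0(I) \subset L_2(I)$, this gives $\bar y \in H^3(I)$: the function itself and each of its first three weak derivatives lie in $L_2(I)$, where the three successive derivatives are $p$, $p'$, $p''$ coming from the $H^2$ regularity of $p$.

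Next, from the state equation \eqref{eq:PDEConstraint} satisfied by $(\bar y,\bar u)$ one has, in the weak sense, $-\bar y''=\bar u+f$, so $\bar u=-\bar y''-f$. Since $\bar y\in H^3(I)$ yields $\bar y''\in H^1(I)$, and the data assumption \eqref{eq:DataRegularity} gives $f\in H^1(I)$, the sum lies in $H^1(I)$ and therefore $\bar u\in H^1(I)$.

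I do not anticipate any real obstacle here: the entire content of the corollary is packaged inside Theorem~\ref{thm:pRegularity}, whose proof did the heavy lifting via the Slater condition \eqref{eq:Slater} and the Neumann obstacle regularity theorem from \cite{Rodrigues:1987:Obstacle}. The only point that deserves a brief sentence of justification is why $\bar y'\in H^2(I)$ upgrades $\bar y$ from $H^2(I)\cap H^1_0(I)$ to $H^3(I)$, which is a one-line consequence of the definition of $H^3$ on the interval $I$.
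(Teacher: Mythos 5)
Your proposal is correct and matches the paper's own (one-line) justification: the paper deduces the corollary directly from Theorem~\ref{thm:pRegularity} via the relations $p=\bar y'$ and $\bar u=-(\bar y''+f)$, exactly as you do. No gaps.
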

\begin{remark}\rm
  The result in \cite{Rodrigues:1987:Obstacle}, which is for
  dimensions $\geq 2$,
   requires a compatibility condition between $\p\psi/\p n$ and the
  Neumann boundary condition so that the boundary trace of
  the normal derivative of the
  solution of the obstacle problem
  belongs to the
 correct Sobolev space.
 This is not needed in one dimension since
  the boundary values of the normal derivative are just numbers.
\end{remark}
\subsection{The Karush-Kuhn-Tucker Conditions}\label{subsec:KKT}
 It follows from \eqref{eq:Shorthands}, Theorem~\ref{thm:pRegularity}
 and integration by parts that
\begin{equation}\label{eq:ObstacleLM}
  b(p,q)+(\phi,q)+\tau q(1)-\sigma q(-1)-\lambda\int_I q\,dx
  +\int_I q\,d\nu=0 \qquad\forall\,q\in H^1(I),
\end{equation}
 where the regular Borel measure $\nu$ is given by
\begin{equation}\label{eq:ObstaclenuDef}
  d\nu=(p''-\phi+\lambda)dx+[p'(-1)+\sigma]d\delta_{-1}-[p'(1)
  +\tau]d\delta_{1},
\end{equation}
 and $\delta_{-1}$ (resp., $\delta_1$) is the Dirac point measure at
 $-1$ (resp., $1$).
\par
 Let $\fA$ be the  active set of the derivative constraint
 \eqref{eq:DerivativeConstraint},
  i.e.,
\begin{equation}\label{eq:ActiveSet}
  \fA=\{x\in [-1,1]:\,\bar y'(x)=\psi(x)\}=\{x\in[-1,1]:\,p(x)=\psi(x)\}.
\end{equation}
 By a standard argument, $p$ satisfies \eqref{eq:VI3} if and only if
\begin{equation}\label{eq:ObstaclenuProperties}
  \text{$\nu$ is nonnegative and supported on $\fA$.}
\end{equation}
\par
 We can translate \eqref{eq:ObstacleLM}--\eqref{eq:ObstaclenuProperties}
  into  Karush-Kuhn-Tucker (KKT) conditions for the solution
  $\bar y'=p\in \cK$ of
 \eqref{eq:NeumannObstacle}/\eqref{eq:VI1}, which is summarized
 in the following theorem.
\begin{theorem}\label{thm:KKT}
  There exists a nonnegative number $\lambda$ such that
\begin{align}
 &\int_I p'q'dx+\int_I(\Phi-f')q\,dx+f(1)q(1)-f(-1)q(-1)+
 \int_I q\,d\nu\label{eq:KKT1}\\
 &\hspace{100pt}=\lambda\int_I q\,dx&\qquad\forall\,q\in H^1(I),\notag\\
  &\int_{[-1,1]}(p-\psi)d\nu=0,\label{eq:KKT2}\\[4pt]
  &d\nu=\rho\,dx+\gamma d\delta_{-1}+\zeta d\delta_1,\label{eq:KKT3}
\end{align}
  where
\begin{align}
 &\text{$\rho=p''+f'-\Phi+\lambda\in L_2(I)$ is nonnegative a.e.},\label{eq:KKT4}\\
  &\text{$\gamma=p'(-1)+f(-1)$ and $\zeta=-[p'(1)+f(1)]$
  are nonnegative numbers},\label{eq:KKT5}
\end{align}
 and $\Phi\in H^1(I)$ satisfies \eqref{eq:PhiDef1}--\eqref{eq:PhiDef2}.
\end{theorem}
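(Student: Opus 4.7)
The plan is to reduce the theorem to an integration by parts performed against the variational inequality \eqref{eq:VI3}, which already supplies the Lagrange multiplier $\lambda\ge 0$ from the Slater condition. Since Theorem~\ref{thm:pRegularity} gives $p\in H^2(I)$, every term in the theorem statement is of a classical type, so the proof will consist of one algebraic identity plus one standard measure-theoretic argument, followed by bookkeeping that replaces the shorthands $\phi$, $\sigma$, $\tau$ with $\Phi-f'$, $f(-1)$, $f(1)$.

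First I would establish \eqref{eq:ObstacleLM}--\eqref{eq:ObstaclenuDef} as a pure identity. For any $q\in H^1(I)$ the regularity $p\in H^2(I)$ allows
\[
 b(p,q)=\int_I p'q'\,dx=p'(1)q(1)-p'(-1)q(-1)-\int_I p''q\,dx,
\]
and substituting this into the left-hand side of \eqref{eq:ObstacleLM} with $\nu$ defined by \eqref{eq:ObstaclenuDef} produces an exact cancellation, so \eqref{eq:ObstacleLM} holds independently of any sign consideration. The substantive content is therefore the nonnegativity and support claim \eqref{eq:ObstaclenuProperties}. Combining \eqref{eq:ObstacleLM} with \eqref{eq:VI3} rewrites the latter as $\int_I(p-q)\,d\nu\ge 0$ for every $q\in\fK$. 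Testing with $q=p-\chi$ for an arbitrary nonnegative $\chi\in H^1(I)$ — which lies in $\fK$ because $q\le p\le\psi$ — yields $\int_I \chi\,d\nu\ge 0$, so $\nu$ is a nonnegative Borel measure. Testing with $q=\psi\in\fK$ gives $\int_I(p-\psi)\,d\nu\ge 0$, while $p-\psi\le 0$ together with $\nu\ge 0$ forces $\int_I(\psi-p)\,d\nu=0$; hence $\nu$ is concentrated on the active set $\fA$ of \eqref{eq:ActiveSet}.

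Finally I would translate \eqref{eq:ObstacleLM}--\eqref{eq:ObstaclenuProperties} into the KKT form. Substituting $\phi=\Phi-f'$, $\tau=f(1)$, $\sigma=f(-1)$ into \eqref{eq:ObstacleLM} gives \eqref{eq:KKT1}, and the support property on $\fA$ is exactly the complementarity \eqref{eq:KKT2}. The decomposition \eqref{eq:KKT3} with $\rho=p''+f'-\Phi+\lambda$, $\gamma=p'(-1)+f(-1)$, and $\zeta=-[p'(1)+f(1)]$ is read off from \eqref{eq:ObstaclenuDef}; $\rho\in L_2(I)$ because $p''\in L_2(I)$, $f'\in L_2(I)$, $\Phi\in H^1(I)$, and $\lambda$ is a constant. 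The individual nonnegativity of $\rho$, $\gamma$, and $\zeta$ follows from $\nu\ge 0$ by separating the Lebesgue part from the two atoms at $\pm 1$. The only point requiring any care is this last separation, since the absolutely continuous and atomic components of a nonnegative measure must themselves be nonnegative; aside from that, every step is routine given Theorem~\ref{thm:pRegularity}, so the main challenge is keeping the notation consistent between the two parameterizations of the problem.
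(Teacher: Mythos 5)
Your proposal is correct and follows essentially the same route as the paper: the paper defines $\nu$ by \eqref{eq:ObstaclenuDef}, obtains the identity \eqref{eq:ObstacleLM} by integration by parts using $p\in H^2(I)$, asserts the equivalence of \eqref{eq:VI3} with the nonnegativity and support property \eqref{eq:ObstaclenuProperties} ``by a standard argument,'' and then translates into the KKT form. Your only addition is to spell out that standard argument explicitly via the test functions $q=p-\chi$ and $q=\psi$ in $\fK$, which is a correct filling-in of the detail the paper leaves implicit.
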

\begin{remark}\label{rem:KKT}\rm
  The KKT conditions \eqref{eq:KKT1}--\eqref{eq:KKT5} are also
  sufficient conditions for \eqref{eq:VI3}.
  Indeed, they imply, for any $q\in\cK$,
\begin{align*}
  &\int_I p'(q'-p')dx+\int_I(\Phi-f')q\,dx+f(1)\big(q(1)-p(1)\big)
  -f(-1)\big(q(-1)-p(-1)\big)\\
    &\hspace{50pt}= \lambda\int_I(q-p)dx-\int_I (q-p)d\nu\\
    &\hspace{50pt}=-\int_I (q-\psi)d\nu\geq 0,
\end{align*}
 which then also implies $\bar y(x)=\int_{-1}^x p(t)dt$ is the solution of
 \eqref{eq:ROCP}.
\end{remark}
\par
 Finally we observe that Theorem~\ref{thm:KKT} implies
\begin{equation}\label{eq:KKT}
  \beta\int_I (\bar y''+f)z''dx+\int_I(\bar y-y_d)z\,dx+\int_{[-1,1]}z'd\mu=0
  \qquad\forall\, z\in H^2(I)\cap H^1_0(I),
\end{equation}
 where
\begin{equation}\label{eq:muDef}
\text{$\mu=\beta\nu$ is a nonnegative Borel measure},
\end{equation}
 and
\begin{equation}\label{eq:Complementarity}
  \int_{[-1,1]} (\bar y'-\psi)d\mu=0.
\end{equation}
%
\section{The Discrete Problem}\label{sec:Discrete}
 Let $V_h\subset H^2(I)\cap H^1_0(I)$ be the cubic
  Hermite finite element space
 (cf. \cite{Ciarlet:1978:FEM,BScott:2008:FEM}) associated with
 a triangulation/partition $\cT_h$ of $I$ with mesh size $h$.  The
 discrete problem is to find
\begin{equation}\label{eq:DiscreteProblem}
  \bar y_h=\argmin_{y_h\in K_h}\frac12\big(\|y_h-y_d\|_\LT^2
  +\beta\|y_h''+f\|_\LT^2\big),
\end{equation}
 where
\begin{equation}\label{eq:KhDef}
  K_h=\{y_h\in V_h:\, P_hy_h'\leq P_h\psi\},
\end{equation}
 and $P_h$ is the interpolation operator associated with the
 $P_1$ finite element space associated with $\cT_h$, i.e., the constraint
 \eqref{eq:PDEConstraint} is only enforced at the grid points.
\par
 The nodal interpolation operator from $C^1([-1,1])$
  onto $V_h$ is denoted by $\Pi_h$.
\par
  We will
 use the following standard estimates for $P_h$
 and $\Pi_h$ (cf. \cite{Ciarlet:1978:FEM,BScott:2008:FEM}) in the error analysis:
\begin{alignat}{3}
  \|\zeta- P_h\zeta\|_\LT&\leq Ch|\zeta|_{H^1(I)}
   &\qquad&\forall\,\zeta\in H^1(I),\label{eq:PhError1}\\
   \|\zeta-P_h\zeta\|_\LT&\leq Ch^2|\zeta|_{H^2(I)}
  &\qquad&\forall\,\zeta\in H^2(I),\label{eq:PhError2}\\
 |\zeta-\Pi_h\zeta|_{H^1(I)}+h|\zeta-\Pi_h\zeta|_{H^2(I)}&\leq Ch^2|\zeta|_{H^3(I)}
 &\qquad&\forall\,\zeta\in H^3(I).\label{eq:PihEst}
\end{alignat}
 Here and below we use $C$ to denote a generic positive constant that
  is independent of the mesh size $h$.
\par
 The unique solution $\bar y_h\in K_h$ of the minimization problem
  defined by \eqref{eq:DiscreteProblem}
 and \eqref{eq:KhDef} is characterized by the discrete variational inequality
\begin{equation*}
  \beta\int_I (\bar y_h''+f)(y_h''-\bar y_h'')dx
  +\int_I (\bar y_h-y_d)(y_h-\bar y_h)dx\geq0
  \qquad\forall\,y_h\in K_h,
\end{equation*}
 which can also be written as
\begin{equation}\label{eq:DVI}
  a(\bar y_h,y_h-\bar y_h)\geq \int_I y_d(y_h-\bar y_h)dx
  -\beta\int_I f(y_h''-\bar y_h'')dx\qquad\forall\,y_h\in K_h,
\end{equation}
 where the bilinear form $a(\cdot,\cdot)$ is defined in \eqref{eq:aDef}.
\par
 The error analysis of the finite element method is based on the
 approach in \cite{BSung:2017:State}
 for state constrained optimal control problems that was extended
  to one dimensional
 problems with constraints on the derivative of the state in \cite{BSW:2020:OneD}.
\par \par
 We will use the energy norm $\|\cdot\|_a$ defined by
\begin{equation}\label{eq:EnergyNorm}
  \|v\|_a^2=a(v,v)=\|v\|_\LT^2+\beta|v|_{H^2(I)}^2.
\end{equation}
 Note that
\begin{equation}\label{eq:NormEquivalence}
  \|v\|_a\approx \|v\|_{H^2(I)} \qquad\forall\,v\in H^2(I)\cap H^1_0(I)
\end{equation}
 by a Poincar\'e-Friedrichs inequality \cite{Necas:2012:Direct}.
%
\subsection{An Abstract Error Estimate}\label{subsec:Abstract}
 In view of \eqref{eq:DVI}, \eqref{eq:EnergyNorm}
 and the Cauchy-Schwarz inequality, we have
\begin{align}\label{eq:Preliminary}
  \|\bar y-\bar y_h\|_a^2&=a(\bar y-\bar y_h,\bar y-y_h)+
      a(\bar y-\bar y_h,y_h-\bar y_h)\notag\\
      &\leq \frac12\|\bar y-\bar y_h\|_a^2+\frac12\|\bar y-y_h\|_a^2
      +a(\bar y,y_h-\bar y_h)\\
            &\hspace{60pt}
             -\int_I y_d(y_h-\bar y_h)dx+
     \beta\int_I f(y_h''-\bar y_h'')dx\qquad\forall\,y_h\in K_h.\notag
\end{align}
\par
 It follows from  \eqref{eq:KKT}, \eqref{eq:Complementarity}  and
 \eqref{eq:KhDef} that
\begin{align}\label{eq:KKTRelation}
  &a(\bar y,y_h-\bar y_h)-\int_I y_d(y_h-\bar y_h)dx+
     \beta\int_I f(y_h''-\bar y_h'')dx\notag\\
     &\hspace{40pt}=\int_{[-1,1]} (\bar y_h'-y_h')d\mu\notag\\
     &\hspace{40pt}=\int_{[-1,1]} (\bar y_h'-P_h\bar y_h')d\mu+
\int_{[-1,1]} (P_h\bar y_h'-P_h\psi)d\mu+\int_{[-1,1]} (P_h\psi-\psi)d\mu\\
        &\hspace{90pt}+\int_{[-1,1]} (\psi-\bar y')d\mu+
         \int_{[-1,1]} (\bar y'-y_h')d\mu\notag\\
&\hspace{40pt}\leq \int_{[-1,1]} (\bar y_h'-P_h\bar y_h')d\mu
  +\int_{[-1,1]} (P_h\psi-\psi)d\mu
       +\int_{[-1,1]} (\bar y'-y_h')d\mu
       \notag
\end{align}
 for all $y_h\in K_h$.
\par
 Putting \eqref{eq:Preliminary} and \eqref{eq:KKTRelation} together, we arrive at the
 abstract error estimate
\begin{align}\label{eq:Abstract}
 \|\bar y-\bar y_h\|_a^2&\leq 2\Big(\int_{[-1,1]}
  (\bar y_h'-P_h\bar y_h')d\mu
   +\int_{[-1,1]}(P_h\psi-\psi)d\mu\Big)\\
   &\hspace{50pt} +\inf_{y_h\in K_h}\Big(\|\bar y-y_h\|_a^2+
   2\int_{[-1,1]} (\bar y'-y_h')d\mu\Big).\notag
\end{align}
%
\subsection{Concrete Error Estimates}\label{subsec:Concrete}
 The three terms on the right-hand side of \eqref{eq:Abstract}
  can be estimated as follows.
 \par
  First of all, we have
\begin{align}\label{eq:Est1}
  \int_{[-1,1]}(\bar y_h'-P_h\bar y_h')d\mu&
   =\int_{[-1,1]} \big[(\bar y_h'-\bar y')-P_h(\bar y_h'-\bar y')\big]d\mu+
     \int_{[-1,1]} (\bar y'-P_h\bar y')d\mu\notag\\
 &=\beta \Big(\int_I \big[(\bar y_h'-\bar y')-P_h(\bar y_h'-\bar y')\big]\rho\,dx+
         \int_I (\bar y'-P_h\bar y')\rho\,dx\Big)\\
      &\leq C\big(h\|\bar y-\bar y_h\|_a+h^2|y|_{H^3(I)}\big),\notag
\end{align}
 by Corollary~\ref{cor:Regularity}, \eqref{eq:KKT3}, \eqref{eq:muDef},
 \eqref{eq:PhError1},
 \eqref{eq:PhError2}, \eqref{eq:NormEquivalence}
  and the fact that $\zeta-P_h\zeta$ vanishes at the
 points $\pm1$ for any $\zeta\in H^1(I)$.
\par
 Similarly we can derive
\begin{equation}\label{eq:Est2}
  \int_{[-1,1]}(P_h\psi-\psi)d\mu=\beta\int_I (P_h\psi-\psi)\rho\,dx
  \leq Ch^2|\psi|_{H^2(I)}
\end{equation}
 by \eqref{eq:DataRegularity} and \eqref{eq:PhError2}.
\par
 Finally we have
\begin{align}\label{eq:Est3}
  &\inf_{y_h\in K_h}\Big(\|\bar y-y_h\|_a^2+
   2\int_{[-1,1]} (\bar y'-y_h')d\mu\Big)\notag\\
   &\hspace{50pt}\leq \|\bar y-\Pi_h \bar y\|_a^2+2\int_{[-1,1]}
   \big[\bar y'-(\Pi_h\bar y)'\big]d\mu\\
   &\hspace{50pt}= \|\bar y-\Pi_h \bar y\|_a^2+2\beta\int_I
   \big[\bar y'-(\Pi_h\bar y)'\big]\rho\,dx
   \leq Ch^2\big[|\bar y|_{H^3(I)}^2+|\bar y|_{H^3(I)}\big],\notag
\end{align}
 by Corollary~\ref{cor:Regularity}, \eqref{eq:KKT3}, \eqref{eq:muDef},
 \eqref{eq:PihEst},
 \eqref{eq:NormEquivalence} and the fact that
 $\bar y'-(\Pi_h\bar y)'$ vanishes at $\pm1$.
\par
 It follows from \eqref{eq:Abstract}--\eqref{eq:Est3} and Young's
 inequality that
\begin{equation}\label{eq:EnergyError}
  \|\bar y-\bar y_h\|_a\leq Ch,
\end{equation}
 which immediately implies the following result, where
 $\bar u_h=-(\bar y_h''+f)$ is the
 approximation for $\bar u=-(\bar y+f)$.
\begin{theorem}\label{thm:ErrorEstimates}
  Under the assumptions on the data in \eqref{eq:DataRegularity}, we have
\begin{equation}
  |\bar y-\bar y_h|_\HO+\|\bar u-\bar u_h\|_\LT\leq Ch.
\end{equation}
\end{theorem}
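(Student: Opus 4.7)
The plan is to observe that Theorem~\ref{thm:ErrorEstimates} follows immediately from the energy estimate \eqref{eq:EnergyError}, which has already been assembled from the abstract bound \eqref{eq:Abstract} together with the three concrete estimates \eqref{eq:Est1}--\eqref{eq:Est3} and Young's inequality. What remains is only to unpack the energy norm on each side of the desired inequality.

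First I would handle the control error. Subtracting $\bar u_h = -(\bar y_h'' + f)$ from $\bar u = -(\bar y'' + f)$ gives $\bar u - \bar u_h = -(\bar y - \bar y_h)''$, and hence
\begin{equation*}
\|\bar u - \bar u_h\|_\LT = |\bar y - \bar y_h|_{H^2(I)} \leq \beta^{-1/2}\,\|\bar y - \bar y_h\|_a \leq Ch
\end{equation*}
directly from the definition \eqref{eq:EnergyNorm} of the energy norm.

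Next I would handle the $H^1$ semi-norm of the state error. The key point is that $\bar y - \bar y_h$ lies in $H^2(I) \cap H^1_0(I)$, since both $\bar y$ (by elliptic regularity applied to \eqref{eq:PDEConstraint}, sharpened by Corollary~\ref{cor:Regularity}) and $\bar y_h \in V_h$ satisfy the homogeneous Dirichlet condition. The obvious bound $|v|_{H^1(I)} \leq \|v\|_{H^2(I)}$ together with the norm equivalence \eqref{eq:NormEquivalence} then gives
\begin{equation*}
|\bar y - \bar y_h|_\HO \leq C\,\|\bar y - \bar y_h\|_{H^2(I)} \leq C\,\|\bar y - \bar y_h\|_a \leq Ch,
\end{equation*}
and summing the two bounds yields the claim.

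There is no substantive obstacle at this final step — the real work was already completed in deriving \eqref{eq:EnergyError}, where the measure integrals involving $d\mu = \beta\,d\nu$ had to be decomposed via \eqref{eq:KKT3} and controlled using the boundary-vanishing property of $\zeta - P_h\zeta$ (for $\zeta \in H^1(I)$) together with the $H^3$ regularity from Corollary~\ref{cor:Regularity}. The passage from the energy norm to the two norms in the theorem statement is purely algebraic.
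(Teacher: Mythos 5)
Your proposal is correct and matches the paper exactly: the paper derives the energy estimate \eqref{eq:EnergyError} from \eqref{eq:Abstract}--\eqref{eq:Est3} and Young's inequality, and then states that Theorem~\ref{thm:ErrorEstimates} follows immediately, which is precisely the algebraic unpacking you carry out (the identity $\bar u-\bar u_h=-(\bar y-\bar y_h)''$ together with \eqref{eq:EnergyNorm} and the norm equivalence \eqref{eq:NormEquivalence}).
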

\begin{remark}\label{rem:Sharp}\rm
 Numerical results in Section~\ref{sec:Numerics} indicate that the
  estimate for $\|\bar u-\bar u_h\|_\LT$
 in Theorem~\ref{thm:ErrorEstimates} is sharp.
\end{remark}
\begin{remark}\label{rem:Comparison}\rm
  For comparison, the error estimate
\begin{equation*}
  |\bar y-\bar y_h|_\HO+\|\bar u-\bar u_h\|_\LT\leq C_\epsilon h^{\frac12-\epsilon}
\end{equation*}
 was obtained in \cite{BSW:2020:OneD} under the assumptions in \eqref{eq:OldData}.
\end{remark}
\section{A Numerical Experiment}\label{sec:Numerics}
 We begin by constructing an example for the problem
 \eqref{eq:ROCP}/\eqref{eq:VI} with a known exact solution.
\subsection{An Example}\label{subsec:Example}
 Let $\beta=1$,
\begin{equation}\label{eq:psiExample}
  \psi(x)=\begin{cases}
   1-\frac92 x^2&\qquad -1\leq x\leq 0\\[2pt]
   1&\qquad \phantom{-}0\leq x\leq 1
   \end{cases},
\end{equation}
 and
\begin{equation}\label{eq:ExactSolution}
  \bar y(x)=\int_{-1}^x p(t)dt,
\end{equation}
 where
\begin{equation}\label{eq:pDef}
  p(x)=\begin{cases}
    1-\frac{81}{32}(x-\frac13)^2&\qquad -1\leq x\leq \frac13 \\[2pt]
    1&\qquad\hspace{10pt} \frac13\leq x\leq 1
  \end{cases}.
\end{equation}
\par
 We have $\psi\in H^2(I)$,
\begin{equation}\label{eq:psiExampleConstraint}
  \int_I\psi\, dx =\frac12,
\end{equation}
 $p\in H^2(I)$,
\begin{equation}\label{eq:pSD}
  p''(x)=\begin{cases}
    -81/16& \qquad -1<x<\frac13\\[2pt]
    0&\qquad\hspace{10pt} \frac13<x<1
  \end{cases},
\end{equation}
 $p'(1)=0$, $p'(-1)=27/4$,
\begin{equation}\label{eq:pProperties}
 \int_I p\,dx=0, \quad p\leq\psi \quad\text{and}\quad \fA=\{-1\}\cup[1/3,1].
\end{equation}
\par
 Let $f\in H^1(I)$ be defined by
\begin{equation}\label{eq:fDef}
  f(x)=\begin{cases}\d
    \frac{2}{9\pi}\sin (\pi(3x-1))&\qquad -1< x\leq \frac13 \\[8pt]
    -(x-\frac13)^2 &\qquad \hspace{10pt} \frac13\leq x<1
  \end{cases}.
\end{equation}
 We have $f(-1)=0$, $f(1/3)=0$, $f_-'(1/3)=2/3$, $f'_+(1/3)=0$
  and $f(1)=-4/9$.
 Therefore the function
\begin{equation}\label{eq:PhiExample}
  \Phi(x)=\begin{cases}
    f'(x)&\qquad -1<x< \frac13\\[2pt]
    f'(x)+\frac23&\qquad\hspace{10pt} \frac13< x<1
  \end{cases}
\end{equation}
 belongs to $H^1(I)$ and
\begin{equation}\label{eq:PhiIntegralExample}
  \int_I\Phi\,dx=\int_I f'(x)+\int_\frac13^1\frac23\,dx
  =f(1)-f(-1)+\frac49=0.
\end{equation}
\par
 Finally we take $\lambda=81/16$ and $y_d=\bar y+\Phi'$.
 Then the KKT conditions \eqref{eq:KKT1}--\eqref{eq:KKT5} are
  satisfied with
\begin{align*}
  d\nu&=[p''+f'-\Phi+\lambda]dx+[p'(-1)+f(-1)]d\delta_{-1}
  -[p'(1)+f(1)]d\delta_1\\
    &=(211/48)\chi_{[1/3,1]} dx+(27/4)d\delta_{-1}+(4/9)d\delta_1,
\end{align*}
 where $\chi_{[1/3,1]}$ is the characteristic function of the
 interval $[1/3,1]$.
%
\subsection{Numerical Results}\label{subsec:Results}
 We solved the problem in Section~\ref{subsec:Example} by the finite
 element method in
 Section~\ref{sec:Discrete} on uniform meshes.  The results are
 displayed in Table~\ref{table:Results}.
\begin{table}[ht]
\begin{tabular}{|l|c|c|c|c|}
\hline
\rule{0pt}{2.5ex}
 $2/h$ &$\|\bar y-\bar y_h\|_{L_2(I)}$&$\|\bar y-\bar y_h\|_{L_\infty(I)}$
 &$|\bar y-\bar y_h|_{H^1(I)}$
  &$|\bar y-\bar y_h|_{H^2(I)}$\\[2pt]
\hline
&&&&\\[-12pt]
 $1+2^0$ & 1.430334\hspace{1pt}e-01&	1.625937\hspace{1pt}e-01&	2.581989\hspace{1pt}e-01&	8.660252\hspace{1pt}e-01\\
 $1+2^1$ & 1.216070\hspace{1pt}e-01&	1.385037\hspace{1pt}e-01&	2.199480\hspace{1pt}e-01&	7.486796\hspace{1pt}e-01\\
 $1+2^2$ & 4.306657\hspace{1pt}e-02 &	4.679253\hspace{1pt}e-02 &	8.061916\hspace{1pt}e-02 &	4.485156\hspace{1pt}e-01\\
 $1+2^3$ & 1.613494\hspace{1pt}e-02 &	1.850729\hspace{1pt}e-02	& 2.919318\hspace{1pt}e-02 &	2.573315\hspace{1pt}e-01 \\
 $1+2^4$ & 3.439341\hspace{1pt}e-03 &	3.849954\hspace{1pt}e-03	& 6.315816\hspace{1pt}e-03 &	1.266029\hspace{1pt}e-01\\
 $1+2^5$ & 9.590453\hspace{1pt}e-04 &	1.087740\hspace{1pt}e-03 &	1.741244\hspace{1pt}e-03 &	6.470514\hspace{1pt}e-02\\
 $1+2^6$ & 2.256478\hspace{1pt}e-04& 2.542346\hspace{1pt}e-04	&4.125212\hspace{1pt}e-04	& 3.223430\hspace{1pt}e-02\\
 $1+2^7$ & 5.874304\hspace{1pt}e-05	& 6.639870\hspace{1pt}e-05& 1.067193\hspace{1pt}e-04&1.618687\hspace{1pt}e-02 \\
 $1+2^8$ & 1.425640\hspace{1pt}e-05	&1.608790\hspace{1pt}e-05	 &2.549283\hspace{1pt}e-05 &	8.086258\hspace{1pt}e-03\\
 $1+2^9$ & 3.657433\hspace{1pt}e-06 &	4.124680\hspace{1pt}e-06	& 6.430499\hspace{1pt}e-06 &	4.047165\hspace{1pt}e-03\\
\hline
\end{tabular}
\bigskip
\caption{Numerical results for the example in Section~\ref{subsec:Example}}
\label{table:Results}
\end{table}
\par
 We observe $O(h)$ convergence in the $H^2$ norm which agrees
 with Theorem~\ref{thm:ErrorEstimates}.
 On the other hand the convergence in the $H^1$ norm is $O(h^2)$,
 better than the $O(h)$  convergence
predicted by Theorem~\ref{thm:ErrorEstimates}.  The convergence
in $L_2$ and $L_\infty$ is also $O(h^2)$.

\section{Concluding Remarks}\label{sec:Conclusions}
 We have shown that higher regularity for the solutions of one
 dimensional Dirichlet
  elliptic distributed optimal control problems
 with pointwise constraints on the derivative of the state can be
 obtained through a variational inequality
 satisfied by the derivative of the optimal state.  A similar result
 for one dimensional
 optimal control problems with mixed boundary conditions was obtained
  earlier in \cite{BSW:2020:OneD}.
 A natural question is: Can these results be extended to higher dimensions?
\par
 For analogs of \eqref{eq:JDef}--\eqref{eq:DerivativeConstraint} on
  a smooth/convex domain
 $\O\in \R^d$ ($d=2,3$), where $f\in H^1(\O)$ and
 $\bm{\Psi}\in [H^2(\O)]^d$,
 one can also derive a variational inequality for the gradient of
  the optimal state.
  Observe that the space $\bG$ of the gradients of the states is
  characterized by
 (cf. \cite[Chapter~I, Section~2.3]{GR:1986:NS})
\begin{align*}
 \bG&=\{\nabla y:\,y\in H^2(\O)\cap H^1_0(\O)\}\notag\\
       &=\{\bq\in[ H^1(\O)]^d:\,\Curl\bq=0 \;\text{on}\;\O\;\text{and}
       \;\bn\times\bq=0 \;
       \text{on}\;\p\O\},
\end{align*}
 where $\bn$ is the unit outward normal along $\p\O$.
\par
 Let $\bK$ be the subset of $\bG$ defined by
\begin{equation*}
 \bK=\{\bq\in\bG:\,\bq\leq\bm{\Psi}\;\text{a.e. in $\O$}\}.
\end{equation*}
 We assume that $\bK$ is nonempty, which is the case for example if
 $\bm{\Psi}\geq{\bf 0}$.
\par
  The analog of \eqref{eq:NeumannVI} is given by
\begin{equation}\label{eq:DirichletHigher}
  \int_\O  (\bPhi-\nabla f)\cdot(\bq-\bp)dx
  +\int_\O \Div\bp\,\Div(\bq-\bp)\,dx
  +\int_{\p\O} f(\bq-\bp)\cdot\bn\,dS\geq 0
\end{equation}
 for all $\bq\in \bK$,    where
 $\bp=\nabla\bar y\in\bK$,
 and $\bPhi\in \bG$ is defined by   $ \beta\Div\bPhi=y_d-\bar y$,
  which is an analog of \eqref{eq:PhiDef1}.
 The variational inequality \eqref{eq:DirichletHigher} is
  uniquely solvable because
 (cf. \cite[Chapter~I, Sections~3.2 and 3.4]{GR:1986:NS})
\begin{equation*}
  \int_\O (\Div \bq)^2dx\geq C_\O|\bq|_{H^1(\O)}^2\qquad \forall\,\bq\in \bG.
\end{equation*}
\par
 We can also write \eqref{eq:DirichletHigher} as
\begin{align}\label{eq:DirichletVector}
  &\int_\O  (\bPhi-\nabla f)\cdot(\bq-\bp)dx+
  \int_\O \big[\Div\bp\,\Div(\bq-\bp)
  +\Curl\bp\cdot\Curl(\bq-\bp)\big]\,dx\\
  &\hspace{100pt}+\int_{\p\O} f(\bq-\bp)\cdot\bn\,dS
  \geq 0\qquad\forall\,\bq\in \bK,\notag
\end{align}
 which can be interpreted as an obstacle problem for the
 vector Laplacian operator
 with natural boundary conditions.
\par
 In order  to obtain
 higher regularity for the optimal state $\bar y$, one will
 need regularity results for
\eqref{eq:DirichletHigher}/\eqref{eq:DirichletVector}, which
unfortunately are not available.
 Therefore the problem of extending the results in this paper
 to higher dimensions remains open.

%
\end{document}